\newtheorem{theorem}{Theorem}[section]
\newtheorem{proposition}[theorem]{Proposition}
\newtheorem{corollary}[theorem]{Corollary}
\theoremstyle{definition}
\newtheorem{definition}[theorem]{Definition}
\newtheorem{remark}[theorem]{Remark}
\newtheorem{example}[theorem]{Example}
\numberwithin{equation}{section}
\newcommand{\fp}{\frak{p}}
\keywords{Lyubeznik numbers, Edge Ideals.}
\subjclass[2010]{13D45, 14C14.}
\begin{document}

\title[Lyubeznik Table of Ideal of Cycle Graphs]{Lyubeznik Table of Ideal of Cycle Graphs}
\author{Parvaneh Nadi}
\email{nadi$_{-}$p@aut.ac.ir}
\author{Farhad Rahmati}
\email{frahmati @ aut.ac.ir}
\address{Department of Mathematics and Computer Science, Amirkabir University of Technology, 424 Hafez Av, Tehran, 1591634311, Iran.}
\author{Majid Eghbali} 
\email{m.eghbali@tafreshu.ac.ir}
\address{Department of
Mathematics,Tafresh University, P.O.Box 39518 79611, Tafresh, Iran} 
  \date{}
\maketitle
\begin{abstract}
Let $R = K[x_1, \ldots, x_n]$ be a polynomial ring over a field $K$,
and $I:=I_{C_n} \subset R$ be an edge ideal of $n$-cycle graph
${C_n}$. In the present paper, we compute the last column of the
Lyubeznik table of $R/I$. 
\end{abstract}
\section{Introduction}
\vskip 0.4 true cm
\label{sect1}
\noindent
Throughout this paper all rings are commutative and Noetherian. Let
$A$ be a local ring which admits a surjective ring homomorphism $\pi
:  S \rightarrow A$, where $(S,\eta,K)$ is a regular local ring of
dimension $n$ containing a field. Set $I = \mathrm{Ker}(\pi)$. In 1993,
Lyubeznik \cite{Lyu2}, using $D$-modules, showed that the local cohomology module
$H_{\eta}^{i}(H_{I}^{n-j}(S))$ is injective and supported at $\eta$.
In these cases $H_{\eta}^{i}(H_{I}^{n-j}(S))$ is a finite direct sum
of some  ($=\lambda_{i,j}(A)$ known as the $i,j$ \textit{Lyubeznik
number} of $A$) copies of the injective hull $E(S/\eta)$ of the
residue field of $S$. In fact the Lyubeznik number
$\lambda_{i,j}(A)$, $i,j \geq 0$ is $i$-th Bass number of the local
cohomology module $ H_{I}^{n-j}(S) $ as:
\[ \lambda_{i,j}(A) := \mu_{i}(\eta,
H_{I}^{n-j}(S))=\mathrm{dim}_{K}\mathrm{Ext}^{i}_{S}(K,H_{I}^{n-j}(S))=
\mu_{0}(\eta,H_{\eta}^{i}(H_{I}^{n-j}(S))).\]
Thus, it depends only on $A$, $i$ and $j$.
 These numbers satisfy the following properties:
\begin{itemize}
\item[i)]
$\lambda_{i,j}(A) =0 $ if $ j > d$,
\item[ii)]
$\lambda_{i,j}(A) =0$ if $i > j$ and $\lambda_{d,d}(A) \neq 0$,
\item[iii)]
Euler characteristic (cf.\cite{Al}): $\sum\limits_{0 \leq i,j \leq
d} (-1)^{i-j} \lambda_{i,j}(A)=1$,
\end{itemize}
where $d = \mathrm{dim} A$. Therefore, we collect all nonzero
Lyubeznik numbers as follows:
\begin{center}
$\Lambda(A)=\begin{bmatrix}
 \lambda_{0,0}&...&\lambda_{0,d} \\
 & \ddots& \vdots\\
& &\lambda_{d,d}
\end{bmatrix},$
\end{center}
in the so-called \textit {Lyubeznik table}. The Lyubeznik table of
$A$ is trivial if $\lambda_{d,d}(A)=1$ and the rest of these
invariants vanish.
Let $R=K[x_1,...,x_n]$ be a polynomial ring over a field $K$ and $G$ a
graph with the vertex set $V(G)=\{x_1,...,x_n \}$ and edge set
$E(G)$. One can associate with $G$ a monomial ideal of R which is
generated by $\{x_ix_j : \{x_i,x_j\} \in E(G) \}$, called the
monomial \textit{edge ideal} of $G$ denoted by $I_{G}$. (See
\cite{V} for more details).\\
\indent To do so further, in the present paper, we are interested in
examining the Lyubeznik table of ideal of cycle graphs especially the last column of these tables. 
For this purpose, in Section
\ref{sec:2}, by using the concept of the minimal vertex covers for $C_n$ of
minimum cardinality we will get some basic results.
As it has obtained from \cite{Al}, when $I$ is a  canonical Cohen-Macaulay squarefree monomial ideal we only have the number $\lambda_{d,d}$ on the last column of it's table.
 In Section  \ref{sec:3}, we examine the canonical Cohen-Macaulay property of $R/I_{C_n}$. Finally, in Section \ref{sec:4}, we compute the
Lyubeznik table of the unmixed part of $I_{C_n}$ and the last column of the
Lyubeznik table of $R/I_{C_n}$.
\section{ Basic Results}
\noindent
\label{sec:2} Let  $R=K[x_1,...,x_n]$ be a polynomial ring over a
field $K$,  $\mathfrak{m}$ denotes its homogeneous maximal ideal $(x_1, . . . ,
x_n)$. Let $C_n$, the \textit{$n$-cycle graph}, be a graph on the
vertex set $\{x_1,...,x_n\}$ and the edge set $\{\{x_1,x_2\},
\{x_2,x_3\}, ..., \{x_{n-1},x_n \}, \{x_n,x_1 \}\}$. Then the edge
ideal of $C_n$ is the ideal \begin{center}
$I_{C_n}=(x_1x_2,x_2x_3,...,x_{n-1}x_n,x_nx_1).$
\end{center}

\begin{definition}\label{0}
 A subset $S \subset V(G)$ is a \textit{vertex cover} for $G$ if for all $\{x,y\} \in E(G), x \in S$ or $y \in S$. The subset $S$ is called a \textit{minimal vertex cover} for $G$ if there is no proper subset of $S$ which is a vertex cover.
\end{definition}

\begin{proposition}(cf. \cite[Proposition 6.1.16]{V})\label{1}
A prime ideal $\mathfrak{q}$ of $R$ is a minimal prime of $I_{G}$ if and only
if $\mathfrak{q}$ is generated by a minimal vertex cover for $G$.
\end{proposition}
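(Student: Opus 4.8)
The plan is to reduce the statement to the elementary combinatorics of subsets of the variables, using the fact that minimal primes of a monomial ideal are again monomial primes. For a subset $S \subseteq V(G)$ write $\mathfrak{p}_S := (x_i : x_i \in S)$; since $R/\mathfrak{p}_S$ is again a polynomial ring over $K$, each $\mathfrak{p}_S$ is prime, and conversely every monomial prime of $R$ arises this way. I would first record the dictionary $\mathfrak{p}_S \subseteq \mathfrak{p}_T \iff S \subseteq T$: a variable $x_i$ lies in $\mathfrak{p}_T$ exactly when $x_i \in T$, so containment of the ideals is equivalent to containment of the index sets. Thus $S \mapsto \mathfrak{p}_S$ is an inclusion-preserving bijection from subsets of $V(G)$ onto the monomial primes of $R$.

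Next I would translate ``$\mathfrak{p}_S$ contains $I_G$'' into ``$S$ is a vertex cover''. As $\mathfrak{p}_S$ is prime and each generator of $I_G$ has the form $x_i x_j$ with $\{x_i,x_j\} \in E(G)$, we have $x_i x_j \in \mathfrak{p}_S$ if and only if $x_i \in \mathfrak{p}_S$ or $x_j \in \mathfrak{p}_S$, i.e. if and only if $x_i \in S$ or $x_j \in S$. Hence $I_G \subseteq \mathfrak{p}_S$ holds precisely when every edge of $G$ meets $S$, which is exactly the condition that $S$ be a vertex cover in the sense of Definition \ref{0}.

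The key input---and the step I expect to be the main obstacle to a fully self-contained argument---is that every minimal prime $\mathfrak{q}$ over the monomial ideal $I_G$ is a monomial prime. I would prove this directly: set $S := \{x_i : x_i \in \mathfrak{q}\}$. Then $\mathfrak{p}_S \subseteq \mathfrak{q}$, while for any monomial $m \in \mathfrak{q}$ primeness of $\mathfrak{q}$ forces one of the variables dividing $m$ to lie in $\mathfrak{q}$, hence in $S$, so $m \in \mathfrak{p}_S$; thus $\mathfrak{p}_S$ is exactly the monomial subideal of $\mathfrak{q}$. Since $I_G$ is generated by monomials and $I_G \subseteq \mathfrak{q}$, these generators lie in $\mathfrak{p}_S$, giving $I_G \subseteq \mathfrak{p}_S \subseteq \mathfrak{q}$. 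Minimality of $\mathfrak{q}$ over $I_G$ now forces $\mathfrak{q} = \mathfrak{p}_S$, so $\mathfrak{q}$ is monomial.

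Finally I would assemble the three ingredients. By the first two paragraphs the monomial primes containing $I_G$ are exactly the $\mathfrak{p}_S$ with $S$ a vertex cover, and $S \mapsto \mathfrak{p}_S$ carries inclusion of vertex covers to inclusion of primes. Since, by the third paragraph, every minimal prime of $I_G$ is monomial, the minimal primes of $I_G$ correspond under this order-isomorphism precisely to the inclusion-minimal vertex covers of $G$. This gives both implications at once: a minimal prime $\mathfrak{q}$ of $I_G$ equals $\mathfrak{p}_S$ for a minimal vertex cover $S$, and conversely $\mathfrak{p}_S$ is a minimal prime of $I_G$ whenever $S$ is a minimal vertex cover.
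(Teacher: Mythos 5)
The paper gives no proof of its own here: the proposition is stated with a citation to \cite[Proposition 6.1.16]{V}, so there is no internal argument to compare against. Your proof is correct and is, in substance, the standard argument behind that citation: the order-isomorphism $S \mapsto \mathfrak{p}_S$ between subsets of $V(G)$ and monomial primes, the observation that $I_G \subseteq \mathfrak{p}_S$ exactly when $S$ is a vertex cover (using primeness on the generators $x_ix_j$), and the key step that a minimal prime $\mathfrak{q}$ of $I_G$ must equal the prime generated by the variables it contains, whence it is monomial. The only compression is in your final assembly: identifying minimal primes of $I_G$ with the minimal elements of the sub-poset of monomial primes containing $I_G$ tacitly uses that every prime containing $I_G$ contains a minimal prime of $I_G$ (standard in a Noetherian ring); with that remark made explicit, the argument is complete and self-contained, which is a genuine improvement over the paper's bare citation.
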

\begin{remark}\label{minimal vertex}
Let $n$ be an even integer. Then only minimal vertex covers for
$C_n$ of cardinality $n/2$ are
$S_1=\{x_1,x_3,x_5,...,x_{n-3},x_{n-1}\}$ and $S_2=\{x_2,x_4,...,x_{n-2},x_n\}$.
Also, let $s$ be a positive integer and $n=2s+1$,  then the
minimal vertex covers for $C_n$ of cardinality $s+1$ are,
\begin{center}
$S_{i}=\{x_{2r+1}: i-1 \leq r \leq s\}\cup \{x_{2r}: 1 \leq r \leq
i-1 \}$, for all $1 \leq i \leq s+1$  and\\
$R_{j}=\{x_{2r+1}: 0 \leq r \leq j-1\}\cup \{x_{2r}: j \leq r \leq s
\}$, for all $1 \leq j.$
\end{center}
\end{remark}
\begin{proposition}\label{5}
Let $r$ be a positive integer. Let  $ \mathfrak{p}_j=(x_{j},
x_{j+1},...,x_{j+h-1})$
 for $1 \leq j \leq r$, be a monomial prime ideal of $R$ of height $h\geq r$. Let $I=\cap^{r}_{j=1} \mathfrak{p}_j$, then $\mathrm{depth} R/I=n-h$.
\end{proposition}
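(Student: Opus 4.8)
The plan is to induct on $r$, peeling off one prime at a time by a Mayer--Vietoris short exact sequence, and then to read off $\depth R/I$ from the depth lemma together with the cheap upper bound coming from dimension. For the base case $r=1$ we have $I=\fp_1=(x_1,\dots,x_h)$, so $R/I\cong K[x_{h+1},\dots,x_n]$ is a polynomial ring and $\depth R/I=n-h$ directly. For the inductive step I write $I_r=\bigcap_{j=1}^{r}\fp_j$ and $I_{r-1}=\bigcap_{j=1}^{r-1}\fp_j$, so that $I_r=I_{r-1}\cap\fp_r$. Since the $\fp_j$ all have the same height $h$ and are pairwise distinct, they are precisely the minimal primes of $I_r$, whence $\dim R/I_r=n-h$; this already gives $\depth R/I_r\le n-h$, so only the lower bound is genuinely at issue.

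The engine is the exact sequence
\[0\to R/I_r\to R/I_{r-1}\oplus R/\fp_r\to R/(I_{r-1}+\fp_r)\to 0,\]
coming from $I_r=I_{r-1}\cap\fp_r$. Here $\depth R/\fp_r=n-h$, and the inductive hypothesis applies to $I_{r-1}$ (an intersection of $r-1$ primes of height $h\ge r>r-1$), giving $\depth R/I_{r-1}=n-h$; hence the middle term has depth $n-h$. The depth lemma then yields $\depth R/I_r\ge\min\{\,n-h,\ \depth R/(I_{r-1}+\fp_r)+1\,\}$, so everything reduces to computing the depth of the rightmost term.

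The heart of the argument is to identify that term. Passing to $\bar R=R/\fp_r$ kills the variables $x_r,\dots,x_{r+h-1}$, and $R/(I_{r-1}+\fp_r)\cong\bar R/\pi(I_{r-1})$ for the quotient map $\pi\colon R\to\bar R$. I claim $\pi(I_{r-1})=(x_{r-1})$ in $\bar R$. For the inclusion $\supseteq$: because $h\ge r$, the index $r-1$ lies in every interval $\{j,\dots,j+h-1\}$ with $1\le j\le r-1$, so $x_{r-1}\in\fp_j$ for all such $j$ and thus $x_{r-1}\in I_{r-1}$; since $r-1\notin\{r,\dots,r+h-1\}$ this variable survives in $\bar R$. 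For $\subseteq$: one always has $\pi(I_{r-1})\subseteq\bigcap_{j=1}^{r-1}\pi(\fp_j)$, and a direct check gives $\pi(\fp_j)=(x_j,\dots,x_{r-1})$; these ideals are nested decreasingly in $j$, so their intersection is the smallest one, namely $(x_{r-1})$. Consequently $R/(I_{r-1}+\fp_r)\cong\bar R/(x_{r-1})$ is a polynomial ring in $n-h-1$ variables, of depth $n-h-1$.

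Feeding $\depth R/(I_{r-1}+\fp_r)=n-h-1$ into the depth lemma gives $\depth R/I_r\ge\min\{\,n-h,\ (n-h-1)+1\,\}=n-h$, and together with $\depth R/I_r\le\dim R/I_r=n-h$ this forces $\depth R/I_r=n-h$, completing the induction. I expect the main obstacle to be precisely the identification $\pi(I_{r-1})=(x_{r-1})$: the inclusion $\pi\bigl(\bigcap\fp_j\bigr)\subseteq\bigcap\pi(\fp_j)$ is automatic but can be strict in general, so the argument genuinely needs the explicit witness $x_{r-1}\in I_{r-1}$ — which is exactly where the hypothesis $h\ge r$ is used — to pin the image down on the nose.
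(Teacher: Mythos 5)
Your proof is correct, and structurally it is the paper's proof: the same induction on $r$, the same short exact sequence $0\to R/I_r\to R/I_{r-1}\oplus R/\mathfrak{p}_r\to R/(I_{r-1}+\mathfrak{p}_r)\to 0$, and the same use of the depth lemma. The one genuinely different move is how you identify the third term. The paper obtains $I_{r-1}+\mathfrak{p}_r=\mathfrak{p}_{r-1}+\mathfrak{p}_r$ by invoking distributivity of sum over intersection for monomial ideals (\cite[Lemma 2.7]{I-SW}), writing $I_{r-1}+\mathfrak{p}_r=\bigcap_{j=1}^{r-1}(\mathfrak{p}_j+\mathfrak{p}_r)$ and observing that these sums $(x_j,\dots,x_{r+h-1})$ form a decreasing chain, so the intersection collapses to $\mathfrak{p}_{r-1}+\mathfrak{p}_r=(x_{r-1},\dots,x_{r+h-1})$. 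You reach the same ideal by computing the image of $I_{r-1}$ in $\bar R=R/\mathfrak{p}_r$ by hand: the automatic inclusion $\pi\bigl(\bigcap_j\mathfrak{p}_j\bigr)\subseteq\bigcap_j\pi(\mathfrak{p}_j)=(x_{r-1})$ gives one direction, and the explicit witness $x_{r-1}\in\bigcap_{j=1}^{r-1}\mathfrak{p}_j$ (the place where $h\ge r$ enters, both for the witness and for the identification $\pi(\mathfrak{p}_j)=(x_j,\dots,x_{r-1})$) forces equality. This trades a special monomial-ideal lemma --- distributivity is false for general ideals, which is why the paper must cite it --- for a self-contained two-line check, a modest but real simplification; the paper's route has the compensating virtue that \cite[Lemma 2.7]{I-SW} is reused immediately afterwards in Proposition \ref{6}, so the citation does double duty there. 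Two smaller divergences are also fine: you close with the inequality form of the depth lemma combined with the upper bound $\operatorname{depth} R/I_r\le\dim R/I_r=n-h$ (justified correctly via the minimal primes all having height $h$), where the paper uses the exact case $\operatorname{depth} B>\operatorname{depth} C$ forcing $\operatorname{depth} A=\operatorname{depth} C+1$; and you state the base case $r=1$ explicitly, whereas the paper starts its induction at $r=2$.
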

\begin{proof}[proof]
We proceed by induction on $r$. Let $r=2$.
Then the Depth Lemma \cite[Lemma 1.3.9]{V}, implies $\mathrm{depth} (R/\mathfrak{p}_1\oplus R/\mathfrak{p}_2)=n-h$, since  $\mathrm{depth} R/\mathfrak{p}_1=\mathrm{depth} R/\mathfrak{p}_2= n-h$.\\
\indent Moreover, we  know that $\mathfrak{p}_1+\mathfrak{p}_2=(x_{1},...,x_{h},x_{h+1})$
and $\mathrm{depth} R/(\mathfrak{p}_1+\mathfrak{p}_2)=n-h-1<n-h$. By applying the Depth
Lemma to the following exact sequence
\[0 \rightarrow R/(\mathfrak{p}_1\cap \mathfrak{p}_2) \rightarrow R/\mathfrak{p}_1\oplus R/\mathfrak{p}_2
\rightarrow R/(\mathfrak{p}_1+\mathfrak{p}_2) \rightarrow 0,\]
 we have,
$\mathrm{depth} R/(\mathfrak{p}_1\cap \mathfrak{p}_2)=\mathrm{depth} R/(\mathfrak{p}_1+\mathfrak{p}_2)+1=n-h$.
 Now, suppose that the statement is true for $r-1$. We show that it is true for $r$. Let $J=\cap^{r-1}_{j=1}\mathfrak{p}_{j}$, by using induction hypothesis, $\mathrm{depth} R/J= n-h$.
It should be noted that  $\mathfrak{p}_{j+1}+\mathfrak{p}_{r}\subset \mathfrak{p}_{j}+\mathfrak{p}_{r}$, for
every $1\leq j \leq r-2$ because
$\{x_{j+1},...,x_{j+h-1} \}\subset\{x_{j},...,x_{j+h-1}\}$,
 and $x_{j+h} \in \mathfrak{p}_{r}$ since $r+j\leq h+j<r+h-1$. On the other hand $J+\mathfrak{p}_{r}=\cap^{r-1}_{j=1}(\mathfrak{p}_{j}+\mathfrak{p}_{r})$, by \cite[Lemma 2.7]{I-SW}.
Therefore
 $J+\mathfrak{p}_{r}=\mathfrak{p}_{r-1}+\mathfrak{p}_{r}$. Since $\mathfrak{p}_r, \mathfrak{p}_{r-1}$ differ by one generator, we get
$\mathrm{depth} R/(J+\mathfrak{p}_{r})=n-h-1$.
Once again using the Depth Lemma to exact sequence,
\[0 \rightarrow R/(\mathfrak{p}_r\cap J) \rightarrow R/\mathfrak{p}_r\oplus R/J \rightarrow
R/(\mathfrak{p}_r+J) \rightarrow 0,\]
we prove the claim.
\end{proof}

\begin{proposition}\label{6}
Keep the notations of Remark \ref{minimal vertex}. Let
$I_1=\cap^{s+1}_{i=1}\mathfrak{p}_{i}$, $I_2=\cap^{s}_{j=1}\mathfrak{q}_{j}$ where
$\mathfrak{p}_{i}$, $\mathfrak{q}_{j}$ are the prime ideals of $R$ generated by $S_{i}$
and $R_{j}$ respectively. Then
 \[I_1+I_2=(\mathfrak{p}_1+\mathfrak{q}_s)\cap (\mathfrak{q}_1+\mathfrak{p}_{s+1}).\]
\end{proposition}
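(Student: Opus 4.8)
The plan is to reduce the computation to an intersection of all the pairwise sums $\mathfrak{p}_i+\mathfrak{q}_j$ and then to isolate the two minimal members of this family. Since all the $\mathfrak{p}_i$ and $\mathfrak{q}_j$ are monomial ideals, and monomial ideals form a distributive lattice under sum and intersection (a monomial lies in $I+J$ iff it lies in $I$ or in $J$, and in $I\cap J$ iff it lies in both), one may distribute the sum over both intersections, exactly as in \cite[Lemma 2.7]{I-SW} applied repeatedly. This gives
\[ I_1 + I_2 = \Big(\bigcap_{i=1}^{s+1}\mathfrak{p}_i\Big) + \Big(\bigcap_{j=1}^{s}\mathfrak{q}_j\Big) = \bigcap_{i=1}^{s+1}\bigcap_{j=1}^{s}(\mathfrak{p}_i + \mathfrak{q}_j). \]
Thus it suffices to prove that $\bigcap_{i,j}(\mathfrak{p}_i+\mathfrak{q}_j)=(\mathfrak{p}_1+\mathfrak{q}_s)\cap(\mathfrak{q}_1+\mathfrak{p}_{s+1})$.

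Next I would pass to the combinatorial picture. Each $\mathfrak{p}_i+\mathfrak{q}_j$ is again a monomial prime generated by the variables indexed by $S_i\cup R_j$, and for such primes $\mathfrak{p}_{A}\subseteq\mathfrak{p}_{B}$ precisely when $A\subseteq B$. Since the pairs $(i,j)=(1,s)$ and $(i,j)=(s+1,1)$ do occur in the family, the inclusion $\bigcap_{i,j}(\mathfrak{p}_i+\mathfrak{q}_j)\subseteq(\mathfrak{p}_1+\mathfrak{q}_s)\cap(\mathfrak{q}_1+\mathfrak{p}_{s+1})$ is automatic. The whole content is therefore the reverse inclusion, which follows once I show that every $S_i\cup R_j$ contains either $S_1\cup R_s$ or $R_1\cup S_{s+1}$; using Remark \ref{minimal vertex}, these two distinguished sets are the set of all odd indices together with $\{2s\}$, and the set $\{1\}$ together with all even indices and $\{2s+1\}$, respectively.

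The heart of the argument is a case analysis on the relative size of $i$ and $j$. Writing out $S_i\cup R_j$ from Remark \ref{minimal vertex}, its odd indices are $\{1,3,\dots,2j-1\}\cup\{2i-1,2i+1,\dots,2s+1\}$ and its even indices are $\{2,4,\dots,2i-2\}\cup\{2j,2j+2,\dots,2s\}$. When $i\le j$ every odd index is present, and the only even indices possibly missing form the block $\{2i,2i+2,\dots,2j-2\}\subseteq\{2,4,\dots,2s-2\}$; hence $S_i\cup R_j\supseteq S_1\cup R_s$. When $i>j$ every even index is present, and the only odd indices possibly missing form the block $\{2j+1,2j+3,\dots,2i-3\}\subseteq\{3,5,\dots,2s-1\}$; hence $S_i\cup R_j\supseteq R_1\cup S_{s+1}$. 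In either case $\mathfrak{p}_i+\mathfrak{q}_j$ contains one of the two distinguished ideals, so it contains their intersection; taking the intersection over all $i,j$ then yields the reverse inclusion and completes the proof.

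I expect the main obstacle to be twofold. First, one must cleanly justify the distributivity step so that the sum of two intersections becomes the intersection of all pairwise sums; this is where the monomial hypothesis, via \cite[Lemma 2.7]{I-SW}, is essential, since the identity fails for general ideals. Second, there is the bookkeeping in the case analysis: one must verify the block containments $\{2i,\dots,2j-2\}\subseteq\{2,\dots,2s-2\}$ and $\{2j+1,\dots,2i-3\}\subseteq\{3,\dots,2s-1\}$ at the boundary values $i=j$ and $i=j+1$ (where in fact $S_i\cup R_j=\{1,\dots,2s+1\}$, so that $\mathfrak{p}_i+\mathfrak{q}_j=\mathfrak{m}$ trivially contains both distinguished ideals) without off-by-one errors.
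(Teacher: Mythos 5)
Your proof is correct and takes essentially the same route as the paper: both distribute the sum over the intersections via \cite[Lemma 2.7]{I-SW} to get $I_1+I_2=\bigcap_{i=1}^{s+1}\bigcap_{j=1}^{s}(\mathfrak{p}_i+\mathfrak{q}_j)$, and then show each $\mathfrak{p}_i+\mathfrak{q}_j$ contains $\mathfrak{p}_1+\mathfrak{q}_s$ or $\mathfrak{q}_1+\mathfrak{p}_{s+1}$ by a case split on $i$ versus $j$ (the paper splits at $i<j$ versus $i\ge j$, you at $i\le j$ versus $i>j$; both work since the boundary pairs give $\mathfrak{p}_i+\mathfrak{q}_j=\mathfrak{m}$). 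If anything, your write-up is slightly more careful than the paper's, since you note explicitly that the pairs $(1,s)$ and $(s+1,1)$ occur in the family, which is what makes the forward inclusion automatic.
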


\begin{proof}[proof]
By  \cite[Lemma 2.7]{I-SW}, we may have $I_1+I_2=\cap^{s+1}_{i=1}
\mathfrak{p}_i+\cap^{s}_{j=1}\mathfrak{q}_j=\cap^{s+1}_{i=1}\cap^{s}_{j=1}(\mathfrak{p}_i+\mathfrak{q}_j)$. So
it is enough to show that for all $i,j,~~ \mathfrak{p}_1+\mathfrak{q}_{s}\subseteq
\mathfrak{p}_i+\mathfrak{q}_j$ or $\mathfrak{q}_1+\mathfrak{p}_{s+1}\subseteq \mathfrak{p}_i+\mathfrak{q}_j$.
For the convenience of the reader we sort them below:
\begin{flushleft}
$\mathfrak{p}_1=(\{x_{2r+1}: 0\leq r \leq s \})$, $\mathfrak{p}_i =(\{x_{2r+1}: i-1 \leq r \leq s \} \cup \{x_{2r}: 1 \leq
r \leq i-1 \})$
\\ $\mathfrak{q}_1=(\{x_{2r}: 1 \leq r \leq s \}\cup \{x_1\})$, $\mathfrak{q}_j= (\{x_{2r+1}: 0 \leq r \leq j-1 \} \cup \{x_{2r}: j \leq r \leq s \})$
$\mathfrak{p}_{s+1}=(\{x_{2s+1} \} \cup \{x_{2r}: 1 \leq r \leq s \})$, $\mathfrak{q}_s=(\{x_{2r+1}: 0 \leq r \leq s-1 \}\cup \{x_{2s} \}).$
\end{flushleft}
\indent Let $i < j$. Then we show that $\mathfrak{p}_1+\mathfrak{q}_{s}\subseteq \mathfrak{p}_i+\mathfrak{q}_j$.
It is clear to see that,  $S_1- \{x_{2r+1}: 0 \leq r
\leq i-1 \} \subseteq S_i$  and $\{x_{2r+1}: 0 \leq r \leq i-1 \}
\subseteq R_j$.
So $\mathfrak{p}_1 \subseteq \mathfrak{p}_i + \mathfrak{q}_j$.
Also $R_s - \{x_{2r}\} \subseteq S_1$ and $x_{2r} \in R_j$ therefore, $\mathfrak{p}_1+\mathfrak{q}_{s}\subseteq \mathfrak{p}_i+\mathfrak{q}_j$.\\
\indent For $i \geq j$, $\mathfrak{q}_1+\mathfrak{p}_{s+1}\subseteq \mathfrak{p}_i+\mathfrak{q}_j$, as
 $S_{s+1}-\{x_{2r}: i-1 \leq r \leq s \} \subseteq
S_i$ and $\{x_{2r}: i-1 \leq r \leq s \} \subseteq R_j$.
 Hence $\mathfrak{p}_{s+1} \subseteq \mathfrak{p}_i + \mathfrak{q}_j$. Also $\mathfrak{q}_1 \subseteq \mathfrak{p}_i + \mathfrak{q}_j$ since $R_1 -\{ x_1\} \subseteq S_{s+1}$ and $x_1 \in R_j$.
\end{proof}

\begin{example}\label{e1}
Let $n=7$ and $ I_{C_7}=I_1\cap I_2$ such that,
\begin{eqnarray*}
I_{1}&=(x_1,x_3,x_5,x_7)\cap (x_2,x_3,x_5,x_7)
\cap (x_2,x_4,x_5,x_7) \cap  (x_2,x_4,x_6,x_7) 
\\
I_{2}&= (x_1,x_2,x_4,x_6) \cap (x_1,x_3,x_4,x_6) \cap (x_1,x_3,x_5,x_6).
\end{eqnarray*}
Therefore,  $\mathrm{depth} R/I_1=\mathrm{depth} R/I_2=3$. and $I_1+I_2=(x_1,x_3,x_5,x_6,x_7)\cap(x_1,x_2,x_4,x_6,x_7)$.
\end{example}

\section{ CCM Property of Edge Ideal of Cycle Graphs}
\label{sec:3}
\noindent
Let $(A,\eta)$ be an $n$-dimensional Gorenstein local ring and $M$
be an $A$-module.
 The module $K_{M}=\mathrm{Ext}^{n-\mathrm{dim}(M)}_{A}(M,A)$ is called the canonical module of $M$.
We say that $M$ is canonically Cohen-Macaulay (abbreviated by CCM) if $K_{M}$ is Cohen-Macaulay (CM for short).\\
\indent In this section we are going to examine the canonical Cohen-Macaulay property of a cycle graph.
For our purpos, we need to recall the concept of the unmixed part of an ideal and Buchsbaum modules.\\
\indent Let $I$ be an ideal of $A$. $I^u$ denotes the
 the unmixed part of I, that is, the intersection of those primary
components $Q$ of $I$ with $\mathrm{ht}(Q) = \mathrm{ht}(I)$. The ideal $I$ is unmixed if $I=I^u$. \\
\indent In the course of this section we will also assume that $R=K[x_1,...,x_n]$ be a polynomial ring over a
field $K$,  $\mathfrak{m}$ denotes its homogeneous maximal ideal. Let $I$ be the monomial edge ideal of cycle graph $C_n$.

\begin{remark}\label{un}
By considering the notation of Proposition \ref{6} for $n=2s+1$, 
$I^{u}= I_1\cap I_2$ such that $I_1$ and $I_2$ are both CM of dimension $s$ by Proposition \ref{5}. Also for $n=2s$ we have $I^{u}=\mathfrak{p}_1\cap \mathfrak{p}_2$ where $\mathfrak{p}_1$ and $\mathfrak{p}_2$ respectively are prime ideals generated by $S_1$ and $S_2$ in Remark \ref{minimal vertex}. One can see that by the depth lemma,
 \begin{align*}
\mathrm{depth} R/I^{u}= \left\{
\begin{array}{rl}
1 & \text{if } n=2s\\
2 & \text{if}  ~n=2s+1
\end{array}. \right.
\end{align*}
\end{remark}

\begin{definition}
Let $(A,\eta)$ be a local ring and $M$ be a Noetherian $R$-module.
We say that a sequence of elements $x_1,...,x_t$ in $\eta$ is a $\mathfrak{p}$-week $M$-sequence, if for $i=1,..,t$,
\begin{center}
$(x_1,...,x_{i-1}). M: x_i \subseteq (x_1,...,x_{i-1}). M: \mathfrak{p}$.
\end{center}
where $\mathfrak{p}$ is an ideal of $A$. $M$ is a Buchsbaum module if every system of parameters
of $M$ is a weak $M$-sequence.
\end{definition}
\begin{proposition}\label{B} 
Let $(A,\eta)$ be a CM ring.
Suppose that $\mathfrak{p}_{1},\mathfrak{p}_{2}$ are both CM ideals of $A$ of
dimension $d>1$
and $\mathfrak{p}_{1}+\mathfrak{p}_{2}=\eta$. Then
$A/(\mathfrak{p}_{1}\cap \mathfrak{p}_{2})$ is a Buchsbaum module. Furthermore,
 $A/(\mathfrak{p}_{1}\cap \mathfrak{p}_{2})$ is a CCM module.
\end{proposition}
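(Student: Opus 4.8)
The plan is to present $M := A/(\mathfrak{p}_1 \cap \mathfrak{p}_2)$ through the Mayer--Vietoris short exact sequence
\[0 \longrightarrow A/(\mathfrak{p}_1 \cap \mathfrak{p}_2) \longrightarrow A/\mathfrak{p}_1 \oplus A/\mathfrak{p}_2 \longrightarrow A/(\mathfrak{p}_1 + \mathfrak{p}_2) \longrightarrow 0,\]
and to exploit that $A/(\mathfrak{p}_1+\mathfrak{p}_2) = A/\eta$ is the residue field while $N := A/\mathfrak{p}_1 \oplus A/\mathfrak{p}_2$ is Cohen--Macaulay of dimension $d$. First I would run the long exact sequence of local cohomology $H^{i}_{\eta}(-)$. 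Since $H^{i}_{\eta}(A/\eta)=0$ for $i>0$ and $H^{i}_{\eta}(N)=0$ for $i \neq d$, and since $d>1$, a direct reading gives $H^{0}_{\eta}(M)=0$, $\ H^{1}_{\eta}(M) \cong A/\eta$, $\ H^{i}_{\eta}(M)=0$ for $2 \leq i \leq d-1$, and $H^{d}_{\eta}(M) \cong H^{d}_{\eta}(N) = H^{d}_{\eta}(A/\mathfrak{p}_1) \oplus H^{d}_{\eta}(A/\mathfrak{p}_2)$. Thus the lower local cohomology of $M$ is of finite length and concentrated in the single degree $i=1$.

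For the Buchsbaum assertion I would invoke the St\"uckrad--Vogel criterion: $M$ is Buchsbaum if and only if the natural comparison map $\varphi^{i}\colon \Ext^{i}_{A}(A/\eta, M) \to H^{i}_{\eta}(M)$ is surjective for every $i<d$. By the computation above the only nontrivial case is $i=1$. Applying $\Hom_{A}(A/\eta,-)$ to the Mayer--Vietoris sequence and using $\depth N = d \geq 2$ (so $\Ext^{1}_{A}(A/\eta,N)=0=\Hom_{A}(A/\eta,N)$) shows $\Ext^{1}_{A}(A/\eta,M) \cong A/\eta$, so the source and target of $\varphi^{1}$ are both one--dimensional over $A/\eta$. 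To see $\varphi^{1}$ is onto I would use that $(\varphi^{i})$ is a morphism of connected sequences of functors, which yields a commutative square linking the connecting map $\Hom_{A}(A/\eta,A/\eta) \to \Ext^{1}_{A}(A/\eta,M)$ to the connecting map $H^{0}_{\eta}(A/\eta) \to H^{1}_{\eta}(M)$. Both connecting maps are isomorphisms (again because $d>1$), and $\varphi^{0}_{A/\eta}$ is an isomorphism, so the square forces $\varphi^{1}$ to be an isomorphism, hence surjective.

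For the CCM assertion I would pass to Matlis duals. Over the Gorenstein ambient ring $A$ of dimension $n$ (as in the definition of the canonical module), local duality gives $K_{M} = \Ext^{n-d}_{A}(M,A) \cong H^{d}_{\eta}(M)^{\vee}$ and likewise $K_{A/\mathfrak{p}_j} \cong H^{d}_{\eta}(A/\mathfrak{p}_j)^{\vee}$. Dualizing the isomorphism $H^{d}_{\eta}(M) \cong H^{d}_{\eta}(A/\mathfrak{p}_1) \oplus H^{d}_{\eta}(A/\mathfrak{p}_2)$ therefore yields $K_{M} \cong K_{A/\mathfrak{p}_1} \oplus K_{A/\mathfrak{p}_2}$. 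Each $K_{A/\mathfrak{p}_j}$ is the canonical module of the Cohen--Macaulay ring $A/\mathfrak{p}_j$, hence a maximal Cohen--Macaulay $A/\mathfrak{p}_j$-module of dimension $d$; a direct sum of two Cohen--Macaulay modules of equal dimension is Cohen--Macaulay, so $K_{M}$ is Cohen--Macaulay and $M$ is CCM.

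The main obstacle is the Buchsbaum step, specifically the surjectivity (equivalently, non-vanishing) of $\varphi^{1}$: the cohomology computation only identifies the two $A/\eta$-spaces involved up to isomorphism, so one must genuinely track the comparison map through the two long exact sequences rather than argue by a dimension count. The naturality square is what makes this transparent. Should the diagram chase prove delicate, an alternative is to check the quasi-Buchsbaum condition $\eta\,H^{1}_{\eta}(M)=0$, which is immediate from $H^{1}_{\eta}(M)\cong A/\eta$, and then appeal to the fact that a module whose lower local cohomology is supported in a single degree is Buchsbaum as soon as it is quasi-Buchsbaum.
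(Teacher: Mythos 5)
Your proof is correct, and its skeleton coincides with the paper's: both start from the Mayer--Vietoris sequence $0 \to A/(\mathfrak{p}_1\cap\mathfrak{p}_2) \to A/\mathfrak{p}_1 \oplus A/\mathfrak{p}_2 \to A/\eta \to 0$ and extract the same cohomological picture, namely $H^1_\eta(M)\cong A/\eta$ and $H^i_\eta(M)=0$ for $1<i<d$, where $M=A/(\mathfrak{p}_1\cap\mathfrak{p}_2)$. The differences lie in how the two conclusions are drawn. For Buchsbaumness the paper records only $\eta H^1_\eta(M)=0$ and cites St\"uckrad--Vogel (Proposition 2.12, p.~81) --- which is exactly your fallback: quasi-Buchsbaum plus local cohomology concentrated in a single degree below $d$ forces Buchsbaum. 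Your primary route, verifying surjectivity of the comparison maps $\varphi^i\colon \Ext^i_A(A/\eta,M)\to H^i_\eta(M)$ for $i<d$ with the naturality square handling the one nontrivial case $i=1$, proves the same thing from a more basic criterion; it is a genuine and correct diagram chase that the paper replaces by a citation. The more substantive divergence is the CCM step: the paper deduces it from Buchsbaumness together with the vanishing $H^i_\eta(M)=0$ for $1<i<d$ via Goto--Shimoda's Theorem 5.4, whereas you compute the canonical module outright, $K_M\cong K_{A/\mathfrak{p}_1}\oplus K_{A/\mathfrak{p}_2}$, a direct sum of CM modules of dimension $d$. Your argument is independent of the Buchsbaum property and of the Goto--Shimoda citation, and yields more (the explicit $K_M$); the paper's is shorter given its references and matches the criterion it reuses, in the converse direction, in the proposition following this one. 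Two minor points to tighten: local duality identifies $K_M$ with the Matlis dual of $H^d_\eta(M)$ only up to completion, which is harmless for CM-ness but can be avoided entirely by applying $\Hom_A(-,A)$ to the short exact sequence, where $\Ext^{n-d}_A(A/\eta,A)=\Ext^{n-d+1}_A(A/\eta,A)=0$ because $d>1$ (the same device as in Remark \ref{ccm}); and since the proposition assumes only that $A$ is CM while $K_M$ is defined through a Gorenstein presentation, your explicit ``Gorenstein ambient'' caveat is the right reading of the statement --- in the paper's applications $A$ is a polynomial ring, so nothing is lost.
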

\begin{proof}[proof]
Let $\fp=\mathfrak{p}_{1}\cap \mathfrak{p}_{2}$. From the long exact sequence
\begin{center}
$0\rightarrow H_{\eta}^{0}(A/\eta)\rightarrow
H_{\eta}^{1}(A/\fp)\rightarrow
0\rightarrow...\rightarrow 0\rightarrow H_{\eta}^{d}(A/\fp)\rightarrow H_{\eta}^{d}(A/\mathfrak{p}_{1}\oplus A/\mathfrak{p}_{2})\rightarrow 0$
\end{center}
one has
$H_{\eta}^{i}(A/\fp)=0$ for all $1<i<d$ and  $H^{1}_{\eta}(A/\fp)\cong
H_{\eta}^{0}(A/\eta)=A/\eta$,
because $A/\mathfrak{p}_{1}$ and $A/\mathfrak{p}_{2}$ are both CM of dimension
$d$. So $\eta H^{1}_{\eta}(A/\fp)=0$. Now, we are done by \cite[Proposition 2.12, page 81]{St-V}.
For the last statment, note that for all $1<i<d$,
$H_{\eta}^{i}(A/\fp)=0$. Hence, the claim follows  from
\cite[Theorem 5.4]{G-Sh}.
\end{proof}
\begin{proposition} \label{Buchs}
Let $I$ be the monomial edge ideal of the $n$-cycle graph $C_{n}$. Then 
 $R/I^{u}$ is a Buchsbaum module.
\end{proposition}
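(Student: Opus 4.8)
The plan is to treat the two parities of $n$ separately and, in each, to exhibit $I^{u}$ as an intersection of two Cohen--Macaulay ideals and then apply (or imitate) Proposition \ref{B}. Throughout I would work after localizing at $\fm$, so that the ambient ring is the regular --- hence Cohen--Macaulay --- local ring required by Proposition \ref{B}. When $n=2s$, Remark \ref{un} gives $I^{u}=\mathfrak{p}_{1}\cap\mathfrak{p}_{2}$ with $\mathfrak{p}_{1},\mathfrak{p}_{2}$ the primes generated by $S_{1},S_{2}$ of Remark \ref{minimal vertex}; each is generated by $s$ of the variables, so $R/\mathfrak{p}_{1}$ and $R/\mathfrak{p}_{2}$ are polynomial rings, Cohen--Macaulay of dimension $s$, and clearly $\mathfrak{p}_{1}+\mathfrak{p}_{2}=\fm$. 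Hence Proposition \ref{B} applies directly (for $s>1$) and shows that $R/I^{u}$ is Buchsbaum.

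When $n=2s+1$, Remark \ref{un} gives $I^{u}=I_{1}\cap I_{2}$ with $R/I_{1},R/I_{2}$ Cohen--Macaulay of dimension $s$. Now Proposition \ref{B} cannot be invoked verbatim, since by Proposition \ref{6} the sum $I_{1}+I_{2}=\mathfrak{P}\cap\mathfrak{Q}$, with $\mathfrak{P}=\mathfrak{p}_{1}+\mathfrak{q}_{s}$ and $\mathfrak{Q}=\mathfrak{q}_{1}+\mathfrak{p}_{s+1}$, is \emph{not} $\fm$. The remedy is to compute $H^{i}_{\fm}(R/I^{u})$ by two nested Mayer--Vietoris sequences. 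Here $\mathfrak{P}$ and $\mathfrak{Q}$ are again generated by subsets of the variables, of cardinality $s+2$, so $R/\mathfrak{P},R/\mathfrak{Q}$ are Cohen--Macaulay of dimension $s-1$ by Proposition \ref{5}, and a direct check gives $\mathfrak{P}+\mathfrak{Q}=\fm$.

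First I would apply local cohomology to $0\to R/(\mathfrak{P}\cap\mathfrak{Q})\to R/\mathfrak{P}\oplus R/\mathfrak{Q}\to R/\fm\to 0$; using that $H^{i}_{\fm}$ of $R/\mathfrak{P}$ and of $R/\mathfrak{Q}$ vanishes except in degree $s-1$, this yields $H^{1}_{\fm}(R/(I_{1}+I_{2}))\cong H^{0}_{\fm}(K)=K$ and $H^{i}_{\fm}(R/(I_{1}+I_{2}))=0$ for $2\le i\le s-2$. Substituting this into the Mayer--Vietoris sequence attached to $0\to R/(I_{1}\cap I_{2})\to R/I_{1}\oplus R/I_{2}\to R/(I_{1}+I_{2})\to 0$, and using $H^{i}_{\fm}(R/I_{1})=H^{i}_{\fm}(R/I_{2})=0$ for $i\neq s$, the outcome I expect is that the only nonzero local cohomology module of $M:=R/I^{u}$ strictly below $\dim M=s$ is $H^{2}_{\fm}(M)\cong K$. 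Consequently $\fm\, H^{i}_{\fm}(M)=0$ for all $i<s$, and, exactly as in Proposition \ref{B}, \cite[Proposition 2.12, page 81]{St-V} shows that $R/I^{u}$ is Buchsbaum.

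The main obstacle is the odd case: carrying out the two Mayer--Vietoris computations with correct indexing, confirming $\mathfrak{P}+\mathfrak{Q}=\fm$ together with the Cohen--Macaulay dimensions via Proposition \ref{5}, and verifying that the single surviving intermediate module is annihilated by $\fm$, so that the Stückrad--Vogel criterion applies in the same way it does for Proposition \ref{B}. Finally, for the small cases $n=3,5$ (where $s\le 2$) the dimension count forces $\depth R/I^{u}=\dim R/I^{u}$, so $R/I^{u}$ is Cohen--Macaulay and a fortiori Buchsbaum; these I would dispose of separately.
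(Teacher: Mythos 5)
Your proposal is correct and takes essentially the same route as the paper: even $n$ and the small cases $n=3,5$ are dispatched the same way, and for odd $n\geq 7$ you use the identical short exact sequence $0\to R/I^{u}\to R/I_{1}\oplus R/I_{2}\to R/(I_{1}+I_{2})\to 0$ to isolate $H^{2}_{\fm}(R/I^{u})\cong K$ as the unique intermediate local cohomology and then invoke the same St{\"u}ckrad--Vogel criterion. The only cosmetic difference is that you unroll the first Mayer--Vietoris computation for $I_{1}+I_{2}=\mathfrak{P}\cap\mathfrak{Q}$ by hand (the paper gets it by citing Proposition \ref{B} for $R/(I_{1}+I_{2})$) and you read off $H^{1}_{\fm}(R/I^{u})=0$ from the sequence rather than quoting $\depth R/I^{u}=2$ from Remark \ref{un}.
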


\begin{proof}[proof]
According to what was said in the Remark \ref{un}, for even integer $n$, $R/I^{u}$ is a Buchsbaum module by  Proposition
\ref{B}.
 In the case $n$ is odd, if $n=3~ or~ 5$ then, $R/I$ is CM so it is Buchsbaum by \cite[Example 2]{St-V}.
  Therefore, we consider $n\geq 7$ and set $d=\mathrm{dim} R/I$. We have $I^{u}=I_{1}\cap I_{2}$ such that
   $R/I_{1}$ and $R/I_{2}$ are both CM modules. Also we can extract of  Proposition \ref{6}, $R/(I_{1}+I_{2})$ is a Buchsbaum module by  Proposition \ref{B}. Put $J=I_{1}+I_{2}$. We have the long exact sequence
\begin{center}
$0\rightarrow H_{\mathfrak{m}}^{1}(R/J)\rightarrow
H_{\mathfrak{m}}^{2}(R/I^{u})\rightarrow
0\rightarrow...\rightarrow 0\rightarrow
H_{\mathfrak{m}}^{d-1}(R/J)\rightarrow H_{\mathfrak{m}}^{d}(R/I^{u})\rightarrow H_{\mathfrak{m}}^{d}(R/I_{1}\oplus R/I_{2})\rightarrow
0.$
\end{center}
As for all $ 1<i<d-1$, $H_{\mathfrak{m}}^{i}(R/J)=0$ and also by the Remark \ref{un}, $\mathrm{depth}(R/I^{u})=2$, we can conclude that for
all $2< i <d$, $ H^{i}_{\mathfrak{m}}(R/I^{u})=0$.
On the other hand, $H_{\mathfrak{m}}^{1}(R/J)\cong H_{\mathfrak{m}}^{2}(R/I^{u})$ and by \cite[Proposition 2.12, page 81]{St-V}, $\mathfrak{m} H^{1}_{\mathfrak{m}}(R/J)=0$. Hence $\mathfrak{m} H^{2}_{\mathfrak{m}}(R/I^{u})=0$. Once again applying \cite[Proposition 2.12, page 81]{St-V}, $R/I^{u}$ is Buchsbaum.
\end{proof}
\begin{remark}\label{ccm}
We note that for any ideal $J$ of $R$ we may write $J=J^{u}\cap J'$ and get the following long exact sequence
 \begin{center}
$...\rightarrow \mathrm{Ext}_{R}^{n-d}(R/(J^{u}+J'),R)\rightarrow
\mathrm{Ext}_{R}^{n-d}(R/J^{u},R)\oplus
\mathrm{Ext}_{R}^{n-d}(R/J',R)\rightarrow
\mathrm{Ext}_{R}^{n-d}(R/J,R)\rightarrow
\mathrm{Ext}_{R}^{n-d+1}(R/(J^{u}+J),R)\rightarrow...$,
\end{center}
where $d=\mathrm{dim} R/J$. Since $\mathrm{dim} R/(J^{u}+J')<\mathrm{dim} R/J'<d$,
 \begin{center} $
\mathrm{Ext}_{R}^{n-d}(R/(J^{u}+J'),R)=\mathrm{Ext}_{R}^{n-d}(R/J',R)=\mathrm{Ext}_{R}^{n-d+1}(R/(J^{u}+J'),R)=0.$
\end{center}
It follows that $K_{R/J}\cong K_{R/J^{u}}$. So
$R/J$ is CCM if and only if $R/J^{u}$ is CCM.
\end{remark}

\begin{proposition}
Let $n\neq 3 , 5$ and $I$ be the monomial edge ideal of the $n$-cycle graph $C_{n}$. Then
 $R/I$ is CCM module if and only if $n$ is an even integer.
\end{proposition}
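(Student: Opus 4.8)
The plan is to push the whole question onto the unmixed part and then settle the two parities separately, feeding the local-cohomology data already assembled in Remark~\ref{un} and in the proof of Proposition~\ref{Buchs} into a local-cohomology criterion for the CCM property. First I would invoke Remark~\ref{ccm}: since $K_{R/I}\cong K_{R/I^{u}}$, the ring $R/I$ is CCM if and only if $R/I^{u}$ is CCM. So it suffices to decide the CCM property of $M:=R/I^{u}$, and I set $d=\dim M$; note that $I^{u}$ is unmixed, so $M$ is equidimensional and $\dim K_{M}=d$.

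For even $n=2s$ (so $s=n/2\geq 2$), Remark~\ref{un} gives $I^{u}=\mathfrak{p}_{1}\cap\mathfrak{p}_{2}$ with $\mathfrak{p}_{1}=(x_{1},x_{3},\dots,x_{n-1})$ and $\mathfrak{p}_{2}=(x_{2},x_{4},\dots,x_{n})$. Each $R/\mathfrak{p}_{i}$ is a polynomial ring, hence CM of dimension $s>1$, and $\mathfrak{p}_{1}+\mathfrak{p}_{2}=\mathfrak{m}$ because $S_{1}\cup S_{2}$ exhausts the variables. These are precisely the hypotheses of Proposition~\ref{B}, so I would simply quote it to conclude that $M$ is CCM, and hence that $R/I$ is CCM.

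For odd $n=2s+1$ with $n\geq 7$ (so $s\geq 3$) the target is the negative direction. Remark~\ref{un} records $\depth M=2$ and $\dim M=s$, so $d=s\geq 3$; moreover the computation inside Proposition~\ref{Buchs} shows $H^{i}_{\mathfrak{m}}(M)=0$ for $2<i<d$, while $\depth M=2$ forces $H^{2}_{\mathfrak{m}}(M)\neq 0$. Thus $M$ carries a nonzero intermediate local cohomology module exactly at $i=2$, which lies in the range $2\leq i\leq d-1$. I would then apply the local-cohomology characterization of the CCM property from \cite[Theorem~5.4]{G-Sh} --- the very criterion used in Proposition~\ref{B}, now in its sharp two-sided form, that $M$ is CCM if and only if $H^{i}_{\mathfrak{m}}(M)=0$ for every $i$ with $2\leq i\leq d-1$ --- so that the nonvanishing of $H^{2}_{\mathfrak{m}}(M)$ certifies that $M$, and therefore $R/I$, fails to be CCM. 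This is also where the hypothesis $n\neq 3,5$ enters: for those two values $R/I$ is Cohen--Macaulay, hence trivially CCM, and the range $2\leq i\leq d-1$ is empty.

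The main obstacle is the odd case, and specifically having this criterion in the correct direction. The \emph{if} direction of \cite[Theorem~5.4]{G-Sh} is exactly what Proposition~\ref{B} already uses; here I need its converse, namely that a nonzero $H^{i}_{\mathfrak{m}}(M)$ with $2\leq i\leq d-1$ genuinely obstructs Cohen--Macaulayness of $K_{M}=\Ext_{R}^{\,n-d}(M,R)$. By graded local duality $K_{M}\cong H^{d}_{\mathfrak{m}}(M)^{\vee}$, and the delicate point is the comparison of $H^{i}_{\mathfrak{m}}(K_{M})$ with $H^{i}_{\mathfrak{m}}(M)$ in the intermediate range: one must see that the nonvanishing of $H^{2}_{\mathfrak{m}}(M)$ propagates to $H^{2}_{\mathfrak{m}}(K_{M})\neq 0$, forcing $\depth K_{M}\leq 2<d=\dim K_{M}$. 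Should the cited theorem only be available in the one direction, I would establish this propagation directly from the deficiency-module and duality machinery, which is the single genuinely nontrivial step.
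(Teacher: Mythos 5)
Your route is the paper's own: reduce to the unmixed part via Remark~\ref{ccm}, settle even $n$ by Proposition~\ref{B}, and in the odd case play the nonvanishing of $H^{2}_{\mathfrak{m}}(R/I^{u})$ (forced by $\depth R/I^{u}=2$ from Remark~\ref{un}) against the converse direction of \cite[Theorem~5.4]{G-Sh}. But there is one genuine gap at exactly the point you flag as delicate: you quote that theorem as an unconditional equivalence, ``$M$ is CCM if and only if $H^{i}_{\mathfrak{m}}(M)=0$ for $2\leq i\leq d-1$,'' and your fallback plan asserts that $H^{2}_{\mathfrak{m}}(M)\neq 0$ always propagates to $H^{2}_{\mathfrak{m}}(K_{M})\neq 0$. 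Both statements are false for general modules, even unmixed ones: in $R=K[x_{1},\dots,x_{5}]$ take $J=(x_{1},x_{2})\cap(x_{3},x_{4})$, which is unmixed with $d=\dim R/J=3$. The Mayer--Vietoris sequence for this intersection gives $H^{2}_{\mathfrak{m}}(R/J)\cong H^{1}_{\mathfrak{m}}(K[x_{5}])\neq 0$, while the $\Ext$ long exact sequence gives $K_{R/J}\cong K_{R/(x_{1},x_{2})}\oplus K_{R/(x_{3},x_{4})}$, which is Cohen--Macaulay of dimension $3$. So $R/J$ is CCM with nonzero cohomology precisely in your range $2\leq i\leq d-1$, and the propagation to $K_{R/J}$ fails. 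The Goto--Shimoda criterion is a theorem about \emph{Buchsbaum} modules (note $H^{2}_{\mathfrak{m}}(R/J)$ above has infinite length, so $R/J$ is not Buchsbaum), and that hypothesis is what makes the converse direction true.

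The repair is immediate and is exactly how the paper argues: Proposition~\ref{Buchs} proves that $R/I^{u}$ is Buchsbaum, and you cite its proof only for the vanishing $H^{i}_{\mathfrak{m}}(R/I^{u})=0$, $2<i<d$, never for its actual conclusion, which is the hypothesis you need. With Buchsbaumness in hand, \cite[Theorem~5.4]{G-Sh} legitimately yields that CCM would force $H^{i}_{\mathfrak{m}}(R/I^{u})=0$ for all $1<i<d$, contradicting $\depth R/I^{u}=2$ once $d=s\geq 3$, i.e.\ $n\geq 7$ odd; the ``deficiency-module machinery'' you offer as a substitute cannot replace this input, as the counterexample shows. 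Adding that one invocation makes your proof coincide with the paper's (your even case and the reduction $K_{R/I}\cong K_{R/I^{u}}$ are already verbatim the paper's steps).
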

\begin{proof}[proof]
Let $n$ be an even integer by the Proposition \ref{B} and the Remark \ref{ccm}, $R/I$ is CCM.

Conversely, let $R/I$ be CCM. So $R/I^{u}$ is. If $n$ is not even, as $R/I^{u}$ is Buchsbaum,
for $1 <i < d=\mathrm{dim} R/I$, $H^{i}_{\mathfrak{m}} (R/I^{u})=0$ by \cite[Theorem 5.4]{G-Sh}. It contradicts this fact that
$\mathrm{depth} R/I^{u}=2$. So $n$ is even and we are done.
\end{proof}

\section{ Lyubeznik Numbers}
\label{sec:4}
\noindent
We assume as in the previous section that $R=K[x_1,...,x_n]$ is a
polynomial ring over a field $K$,  $\mathfrak{m}$ denotes its homogeneous
maximal ideal $(x_1, . . . , x_n)$. The aim of this section is to
describe the last column of the Lyubeznik table of $R/I$, where $I$
is the monomial edge ideal of the $n$-cycle graph $C_{n}$. For this we will establish the Lyubeznik
table of the unmixed part of cyclic ideals.
\begin{corollary}
Let $n$ be an even integer. Then the Lyubeznik
table of $R/I^{u}$ is as follows:
\begin{center}
$\Lambda(R/I^{u})=\begin{bmatrix}
 0&1&0&\cdots &0 \\
 &0 &\ddots & \vdots&\vdots\\
& &\ddots &&0\\
&& &&2\\
\end{bmatrix}.$
\end{center}
\end{corollary}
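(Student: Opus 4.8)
The plan is to read the Lyubeznik numbers $\lambda_{i,j}(R/I^{u})=\mu_{i}(\mathfrak{m},H^{n-j}_{I^{u}}(R))$ directly off an explicit description of the local cohomology modules $H^{\bullet}_{I^{u}}(R)$ of the ambient regular ring. By Remark \ref{un} we have $I^{u}=\mathfrak{p}_{1}\cap\mathfrak{p}_{2}$, where $\mathfrak{p}_{1},\mathfrak{p}_{2}$ are the two linear primes generated by $S_{1},S_{2}$; each has height $s=n/2$, so $d:=\dim R/I^{u}=s$, and since $S_{1}\cup S_{2}=\{x_{1},\dots,x_{n}\}$ we get $\mathfrak{p}_{1}+\mathfrak{p}_{2}=\mathfrak{m}$. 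First I would feed these into the Mayer--Vietoris sequence for local cohomology
\[
\cdots\to H^{k}_{\mathfrak{m}}(R)\to H^{k}_{\mathfrak{p}_{1}}(R)\oplus H^{k}_{\mathfrak{p}_{2}}(R)\to H^{k}_{I^{u}}(R)\to H^{k+1}_{\mathfrak{m}}(R)\to\cdots.
\]
Because $R$ is regular and $\mathfrak{p}_{i},\mathfrak{m}$ are generated by regular sequences, the only nonzero source terms are $H^{s}_{\mathfrak{p}_{i}}(R)$ and $H^{n}_{\mathfrak{m}}(R)=E(R/\mathfrak{m})$. Tracking degrees (using $s\geq 2$, i.e. $n\geq 4$, so that the matrix is as drawn) collapses the sequence to two isomorphisms, $H^{n-d}_{I^{u}}(R)=H^{s}_{I^{u}}(R)\cong H^{s}_{\mathfrak{p}_{1}}(R)\oplus H^{s}_{\mathfrak{p}_{2}}(R)$ and $H^{n-1}_{I^{u}}(R)\cong H^{n}_{\mathfrak{m}}(R)=E(R/\mathfrak{m})$, with $H^{k}_{I^{u}}(R)=0$ in all other degrees. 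Thus only the columns $j=1$ and $j=d$ of the table can be nonzero.

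Next I would compute the Bass numbers of these two modules. The injective hull contributes $\mu_{i}(\mathfrak{m},E(R/\mathfrak{m}))=\delta_{i,0}$, so the column $j=1$ holds exactly $\lambda_{0,1}(R/I^{u})=1$. For the column $j=d$, splitting the variables of $R$ into those occurring in $\mathfrak{p}_{i}$ and the complementary ones writes $R=T\otimes_{K}P$ with $H^{s}_{\mathfrak{p}_{i}}(R)\cong E_{i}\otimes_{K}P$, where $E_{i}$ is the injective hull of $K$ over $T=K[x_{j_1},\dots,x_{j_s}]$ and $P$ is the polynomial ring in the remaining $n-s$ variables. A Künneth computation of $\Ext^{\bullet}_{R}(K,-)$ (no correction terms, since the base is a field) then gives $\mu_{i}(\mathfrak{m},H^{s}_{\mathfrak{p}_{i}}(R))=\delta_{i,\,n-s}=\delta_{i,d}$, because $n-s=s=d$. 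Summing over the two primes yields $\lambda_{i,d}(R/I^{u})=2\delta_{i,d}$, i.e. $\lambda_{d,d}(R/I^{u})=2$ and the rest of the last column vanishes. Assembling the two columns produces precisely the asserted table; as a consistency check the Euler characteristic $\sum_{i,j}(-1)^{i-j}\lambda_{i,j}=-1+2=1$ matches property (iii).

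The vanishing of the last column away from the corner can alternatively be quoted from the CCM result of \cite{Al}, since $R/I^{u}$ is CCM by Proposition \ref{B}, and this is fully consistent with the computation above. I expect the main obstacle to be the bookkeeping in the Mayer--Vietoris sequence: one must verify that the connecting maps into and out of $H^{\bullet}_{\mathfrak{m}}(R)$ vanish in the relevant degrees, so that the middle term at $k=s$ is genuinely the \emph{whole} direct sum $H^{s}_{\mathfrak{p}_{1}}(R)\oplus H^{s}_{\mathfrak{p}_{2}}(R)$ and nothing survives in intermediate degrees. The accompanying Bass-number calculation for $H^{s}_{\mathfrak{p}_{i}}(R)$, while routine once the tensor decomposition is in place, is exactly where the value $2$ in the corner originates.
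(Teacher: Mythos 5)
Your proposal is correct, and it proves the statement by a genuinely different route than the paper. The paper's proof is top--down: it quotes the table shape from \cite{N-R-E} (so that the whole table is determined by its first row, with the corner equal to $\lambda_{0,1}+1$), then converts the entries $\lambda_{0,j}$ into graded Ext's via Yanagawa's formula \cite{Y} and, through local duality, into the local cohomology modules $H^{j}_{\mathfrak{m}}(R/I^{u})$, whose vanishing for $1<j<d$ and whose identification $H^{1}_{\mathfrak{m}}(R/I^{u})\cong K$ come from Proposition \ref{B} (the Buchsbaum argument, using that $\mathfrak{p}_1,\mathfrak{p}_2$ are CM with $\mathfrak{p}_1+\mathfrak{p}_2=\mathfrak{m}$); the value $\lambda_{d,d}=2$ is then read off from the table shape rather than computed. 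You instead work bottom--up on the ambient regular ring: the Mayer--Vietoris sequence for $I^{u}=\mathfrak{p}_1\cap\mathfrak{p}_2$ with $\mathfrak{p}_1+\mathfrak{p}_2=\mathfrak{m}$ collapses correctly as you say (the degree bookkeeping checks out for $n\geq 4$: $H^{k}_{\mathfrak{p}_i}(R)$ is concentrated in $k=s$ since $\mathfrak{p}_i$ is generated by a regular sequence of length $s$, and $H^{k}_{\mathfrak{m}}(R)$ in $k=n$, with $s+1<n$), giving $H^{s}_{I^{u}}(R)\cong H^{s}_{\mathfrak{p}_1}(R)\oplus H^{s}_{\mathfrak{p}_2}(R)$ and $H^{n-1}_{I^{u}}(R)\cong E(R/\mathfrak{m})$ and nothing else; the K\"unneth computation of the Bass numbers $\mu_i(\mathfrak{m},H^{s}_{\mathfrak{p}_j}(R))=\delta_{i,d}$ is also sound, since the Koszul resolutions of $K$ over the two tensor factors have no Tor corrections over the field $K$. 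This is essentially the classical computation for two complementary coordinate subspaces meeting at the origin. What your approach buys is self-containedness: you determine the entire table, including all the zero rows and the corner value $2$, without invoking \cite{N-R-E}, Yanagawa's formula, or the Buchsbaum/CCM machinery of Section \ref{sec:3}. What the paper's route buys is uniformity: the same template (table shape plus first-row computation via $H^{\bullet}_{\mathfrak{m}}(R/I^{u})$) also handles the odd case $n=2s+1$, where $I^{u}$ is an intersection of many primes and a direct Mayer--Vietoris computation of $H^{\bullet}_{I^{u}}(R)$ would be substantially harder.
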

\begin{proof}[proof]
By \cite[Theorem 5.5]{N-R-E}, the Lyubeznik table of  $R/I^{u}$ is as 
\begin{equation}\label{table}
\Lambda(S/J)=\begin{bmatrix}
0&\lambda_{0,1}&\lambda_{0,2}&\cdots &\lambda_{0,d-1} & 0 \\
 &0 & 0&\ldots & 0 &0\\
  & & 0&\ldots & 0 &\lambda_{0,d-1}\\
  & & &\ddots & \vdots&\vdots\\
  & & & & 0 & \lambda_{0,2}\\
  & & & &  & \lambda_{0,1}+1\\
\end{bmatrix}.
\end{equation}
 On the other hand by \cite{Y}, for any squarefree monomial ideal $J$ of $R$,
\begin{center}
$\lambda_{i,j}(R/J)=\mathrm{dim}_{k}[\mathrm{Ext}^{n-i}_{R}(\mathrm{Ext}^{n-j}_{R}(R/J,R),R)]_{0}.$
\end{center}
So from the Proposition \ref{B} and local duality, for all $1<i<d$,  $\lambda_{0,i}(R/I^{u})=0$ since $H_{\mathfrak{m}}^{i}(R/I^{u})=0$. Also
\begin{center}
$\lambda_{0,1}(R/I^{u})=\mathrm{dim}_{K}[\mathrm{Ext}^{n}_{R}(\mathrm{Ext}^{n-1}_{R}(R/I^{u},R),R)]_{0}= \mathrm{dim}_{K}[\mathrm{Ext}^{n}_{R}(K,R)]_{0}=1$,
\end{center}
because  $H^{1}_{\mathfrak{m}}(R/I^{u})\cong
H_{\mathfrak{m}}^{0}(A/\mathfrak{m})=A/\mathfrak{m}$. So $\lambda_{d,d}(R/I^{u})=2$ where $d=\mathrm{dim}(R/I)$ and we get the result.
\end{proof}
\begin{remark}
 Note that for $n=3,5$ the ideal $I$ is Cohen-Macaullay so it has the
trivial Lyubeznik table. Hence we compute the
Lyubeznik table of $R/I^{u}$ for $n=2s+1$ where $s \geq 3$.
\end{remark}
\begin{corollary}
Let $n=2s+1$ where $s \geq 3$. Then the Lyubeznik table of $R/I^{u}$ is as follow:

\begin{center}
$\Lambda(R/I^{u})=\begin{bmatrix}
0&0&1&\cdots &0 \\
 &0 &\ddots & \vdots &\vdots\\
& &\ddots &&1\\
&& &&1\\
\end{bmatrix}.
$
\end{center}
\end{corollary}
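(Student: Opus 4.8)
The strategy is to mirror the even case exactly: read the whole table off its top row by invoking \cite[Theorem 5.5]{N-R-E}, and compute that top row through the Yanagawa formula \cite{Y} together with local duality. First I would record that $d=\mathrm{dim}\,R/I^{u}=s$, so the table is $(d+1)\times(d+1)$. By \cite[Theorem 5.5]{N-R-E} the Lyubeznik table of $R/I^{u}$ has the shape \eqref{table}; in particular every entry is governed by the top-row numbers $\lambda_{0,1},\dots,\lambda_{0,d-1}$, with $\lambda_{d,d}=\lambda_{0,1}+1$, $\lambda_{d-1,d}=\lambda_{0,2}$, the remaining last-column entries equal to the higher $\lambda_{0,i}$, and all other entries zero. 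Thus it suffices to determine $\lambda_{0,1},\dots,\lambda_{0,d-1}$.

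Next I would translate these into local cohomology. By \cite{Y} and local duality, $\lambda_{0,i}(R/I^{u})=\mathrm{dim}_{K}[\Ext^{n}_{R}(\Ext^{n-i}_{R}(R/I^{u},R),R)]_{0}$ is controlled by $H^{i}_{\mathfrak{m}}(R/I^{u})$ and vanishes whenever the latter does. From Remark \ref{un} we have $\depth R/I^{u}=2$, so $H^{1}_{\mathfrak{m}}(R/I^{u})=0$ and hence $\lambda_{0,1}=0$; and from the proof of Proposition \ref{Buchs} we have $H^{i}_{\mathfrak{m}}(R/I^{u})=0$ for $2<i<d$, giving $\lambda_{0,i}=0$ in that range. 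The whole table therefore reduces to the single number $\lambda_{0,2}$.

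The heart of the argument is to show $\lambda_{0,2}(R/I^{u})=1$, i.e. that $H^{2}_{\mathfrak{m}}(R/I^{u})$ is one-dimensional over $K$, in complete analogy with the computation of $\lambda_{0,1}$ in the even case. The long exact sequence used in the proof of Proposition \ref{Buchs} already gives $H^{2}_{\mathfrak{m}}(R/I^{u})\cong H^{1}_{\mathfrak{m}}(R/J)$ with $J=I_{1}+I_{2}$. By Proposition \ref{6}, $R/J=R/((\mathfrak{p}_{1}+\mathfrak{q}_{s})\cap(\mathfrak{q}_{1}+\mathfrak{p}_{s+1}))$ is the quotient by an intersection of two CM primes that sum to $\mathfrak{m}$ and have dimension $d-1=s-1$. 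Since $s\geq 3$ forces $d-1>1$, Proposition \ref{B} applies and yields $H^{1}_{\mathfrak{m}}(R/J)\cong R/\mathfrak{m}=K$. Feeding $\mathrm{dim}_{K}H^{2}_{\mathfrak{m}}(R/I^{u})=1$ through the degree-zero bookkeeping of the Yanagawa formula exactly as before gives $\lambda_{0,2}=1$.

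Assembling, the top row has its only nonzero entry $\lambda_{0,2}=1$, whence $\lambda_{d-1,d}=\lambda_{0,2}=1$ and $\lambda_{d,d}=\lambda_{0,1}+1=1$, and every other entry is $0$, which is precisely the asserted table. The step I expect to be most delicate is the identification $H^{2}_{\mathfrak{m}}(R/I^{u})\cong K$: it requires chaining the two long exact sequences correctly and verifying the dimension hypothesis $d-1>1$ of Proposition \ref{B} for $R/J$, which is exactly what the restriction $s\geq 3$ guarantees (for $s=1,2$, i.e. $n=3,5$, the ring is already Cohen--Macaulay and the table is trivial).
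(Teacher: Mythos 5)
Your proposal is correct and follows essentially the same route as the paper's own proof: the table shape from \cite[Theorem 5.5]{N-R-E}, Yanagawa's formula with local duality to kill $\lambda_{0,i}$ for $2<i<d$ (and $\lambda_{0,1}$ via $\depth R/I^{u}=2$), and the isomorphism $H^{2}_{\mathfrak{m}}(R/I^{u})\cong H^{1}_{\mathfrak{m}}(R/(I_{1}+I_{2}))\cong K$ obtained from the long exact sequence in Proposition \ref{Buchs} together with Propositions \ref{6} and \ref{B}, giving $\lambda_{0,2}=\lambda_{0,1}(R/(I_1+I_2))=1$. You are in fact slightly more careful than the paper, since you make explicit the verification that the two primes in Proposition \ref{6} have dimension $s-1>1$ (which is where $s\geq 3$ enters) and that $\lambda_{0,1}(R/I^u)=0$, both of which the paper leaves implicit.
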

\begin{proof}[proof]
The Lyubeznik table of $R/I^{u}$ is as form (\ref{table}). By the proof of proposition \ref{Buchs}, for $2<i <d$,  $\lambda_{0,i}(R/I^{u})=0$ as $H^{i}_{\mathfrak{m}}(R/I^{u})=0$.
Also $H^{2}_{\mathfrak{m}}(R/I^{u})\cong H^{1}_{\mathfrak{m}}(R/I_1+I_2)$. So by local duality
$[\mathrm{Ext}^{n}_{R}(\mathrm{Ext}^{n-2}_{R}(R/I^{u},R),R)]_{0}\cong [\mathrm{Ext}^{n}_{R}(\mathrm{Ext}^{n-1}_{R}(R/I_1+I_2,R),R)]_{0}$. Hence $\lambda_{0,2}(R/I^{u})=\lambda_{0,1}(R/I_1+I_2)=1$. Therefore $\lambda_{d-1,d}(R/I^{u})=1$ for $d=\mathrm{dim}(R/I)$ and the proof is complete.
\end{proof}
Now by using this fact that $K_{R/I}\cong K_{R/I^{u}}$, as we saw in the Remark \ref{ccm}, we can get our main results.
\begin{corollary}
Let $n$ be an even integer. Then the last column of the Lyubeznik
table of $R/I$ is as follows:
\begin{center}
$[0, 0, \dots, 0, 2].$
\end{center}\end{corollary}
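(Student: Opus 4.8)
The plan is to reduce the last column of $\Lambda(R/I)$ to the last column of $\Lambda(R/I^u)$, which has already been determined for even $n$ in the first corollary of this section. First I would observe that the last column of a Lyubeznik table is indexed by $j=d$, where $d=\dim R/I = n/2$. Since $I^u=\mathfrak{p}_1\cap\mathfrak{p}_2$ is the intersection of the height $n/2$ primary components of $I$, it preserves the height of $I$, so $\dim R/I^u=d$ as well. Consequently the canonical modules $K_{R/I}=\Ext^{n-d}_R(R/I,R)$ and $K_{R/I^u}=\Ext^{n-d}_R(R/I^u,R)$ are computed in the same cohomological degree, which is precisely what makes the comparison of the two tables meaningful in the extreme column.

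Next I would apply the formula of \cite{Y}, which in the column $j=d$ reads
\[
\lambda_{i,d}(R/I)=\dim_K[\Ext^{n-i}_R(\Ext^{n-d}_R(R/I,R),R)]_0=\dim_K[\Ext^{n-i}_R(K_{R/I},R)]_0 .
\]
The key input is Remark \ref{ccm}, which supplies the isomorphism $K_{R/I}\cong K_{R/I^u}$. Feeding this isomorphism into the displayed identity, and using that $\Ext^{n-i}_R(-,R)$ carries isomorphic modules to isomorphic modules, gives $\lambda_{i,d}(R/I)=\lambda_{i,d}(R/I^u)$ for every $i$. In other words, the entire last column of $\Lambda(R/I)$ coincides with the last column of $\Lambda(R/I^u)$, so no direct computation with $I$ itself is needed.

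Finally I would read off the last column of $\Lambda(R/I^u)$ from the first corollary of this section, which for even $n$ gives $[0,0,\dots,0,2]$, the only nonzero entry being $\lambda_{d,d}=2$. This is exactly the asserted last column for $R/I$, completing the argument. I do not anticipate a genuine obstacle: once the isomorphism of canonical modules from Remark \ref{ccm} is in hand, the statement is a direct translation through the $\Ext$ formula of \cite{Y}, and the only point demanding care is confirming that $R/I$ and $R/I^u$ share the dimension $d$, so that their canonical modules sit in matching degrees and the column indices line up correctly.
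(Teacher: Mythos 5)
Your proposal is correct and is essentially the paper's own argument: the paper proves this corollary by exactly the route you describe, invoking the isomorphism $K_{R/I}\cong K_{R/I^{u}}$ from Remark~\ref{ccm} together with Yanagawa's formula $\lambda_{i,j}(R/J)=\dim_{K}[\operatorname{Ext}^{n-i}_{R}(\operatorname{Ext}^{n-j}_{R}(R/J,R),R)]_{0}$ to transfer the last column of $\Lambda(R/I^{u})$, already computed as $[0,\dots,0,2]$ for even $n$, to $\Lambda(R/I)$. Your explicit check that $\dim R/I=\dim R/I^{u}=d$, so that the canonical modules sit in the same cohomological degree and the columns line up, is a point the paper leaves implicit but is a welcome verification.
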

\begin{corollary}
Let $s \geq 3$ be a positive integer and $n=2s+1$. Then the last column of
Lyubeznik table of $R/I$ is
\begin{center}
$[0,0, \dots,0, 1,1].$
\end{center}
\end{corollary}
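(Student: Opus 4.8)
The final statement asserts that for $n=2s+1$ with $s\geq 3$, the last column of the Lyubeznik table of $R/I$ is $[0,0,\dots,0,1,1]$. The plan is to transfer the computation from $R/I$ to $R/I^u$ via the canonical-module isomorphism $K_{R/I}\cong K_{R/I^u}$ established in Remark \ref{ccm}, and then read off the last column from the already-computed Lyubeznik table of $R/I^u$ provided by the preceding corollary.

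\emph{First step.} I would recall that by Remark \ref{ccm}, since $I=I^u\cap I'$ with $\dim R/I'<\dim R/I=d$, one has $K_{R/I}\cong K_{R/I^u}$. Using the Yanagawa-type formula already invoked in the excerpt, namely
\[
\lambda_{i,j}(R/J)=\dim_{K}\bigl[\Ext^{n-i}_{R}(\Ext^{n-j}_{R}(R/J,R),R)\bigr]_{0},
\]
the entries $\lambda_{i,d}(R/J)$ of the last column ($j=d$) depend only on $\Ext^{n-d}_{R}(R/J,R)=K_{R/J}$. Therefore the last column of $\Lambda(R/I)$ is determined entirely by $K_{R/I}\cong K_{R/I^u}$, and so it must coincide with the last column of $\Lambda(R/I^u)$.

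\emph{Second step.} I would then appeal to the immediately preceding corollary, which computes
\[
\Lambda(R/I^{u})=\begin{bmatrix}
0&0&1&\cdots &0 \\
 &0 &\ddots & \vdots &\vdots\\
& &\ddots &&1\\
&& &&1\\
\end{bmatrix},
\]
whose last column is exactly $[0,0,\dots,0,1,1]$ (the two nonzero entries being $\lambda_{d-1,d}(R/I^u)=1$ and $\lambda_{d,d}(R/I^u)=1$, with $d=\dim R/I$). Combining the two steps yields that the last column of $\Lambda(R/I)$ is $[0,0,\dots,0,1,1]$, which is the claim.

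\emph{Main obstacle.} The genuinely substantive work has already been done in the earlier results: establishing that $R/I^u$ is Buchsbaum (Proposition \ref{Buchs}), computing the relevant local cohomology vanishing and the identifications $H^1_{\mathfrak m}(R/I^u)\cong A/\mathfrak m$ and $H^2_{\mathfrak m}(R/I^u)\cong H^1_{\mathfrak m}(R/(I_1+I_2))$, and deriving $\Lambda(R/I^u)$. Thus the only delicate point in the present corollary is justifying that passing from $R/I$ to $R/I^u$ leaves the \emph{entire} last column unchanged, not merely the corner entry $\lambda_{d,d}$; this is precisely where the observation that the $j=d$ column depends only on $K_{R/J}=\Ext^{n-d}_R(R/J,R)$ is essential, and I would make sure this dependence is stated explicitly before invoking the isomorphism $K_{R/I}\cong K_{R/I^u}$.
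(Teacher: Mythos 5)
Your proposal is correct and takes essentially the same route as the paper, which derives this corollary (without a separate written proof) precisely by combining $K_{R/I}\cong K_{R/I^{u}}$ from Remark \ref{ccm} with Yanagawa's formula and the preceding computation of $\Lambda(R/I^{u})$. You in fact make explicit the one point the paper leaves tacit, namely that the $j=d$ column of the table depends only on $\Ext^{n-d}_{R}(R/J,R)=K_{R/J}$, so the whole last column (not just $\lambda_{d,d}$) transfers from $R/I^{u}$ to $R/I$.
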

\begin{example}
Let $I=I_{C_7}$ and $J=I_{C_4}$ then
\begin{center}
$\Lambda(R/I)=\begin{bmatrix}
 0&0&1&0 \\
 &0&0 & 0\\
&&0 &1\\
&& &1\\
\end{bmatrix},$~~~~
$\Lambda(R/J)=\begin{bmatrix}
0&1&0 \\
&0&0\\
&&2\\
\end{bmatrix}.$
\end{center}
\end{example}


\proof[\bf Acknowledgment]
The first author would like to thank Professor Dariush Kiani for some helpful discussions.







\begin{thebibliography}{999}

\bibitem {Al}
 \`{A}lvarez Montaner. J, Lyubeznik table of sequentially
Cohen-Macaulay rings,  Comm. Alg. 43. (2015),  3695--3704.

\bibitem {G-Sh}
 Goto. Sh and  Shimoda. Y, On Rees algebras over Buchsbaum rings,
 J. Math. Kyoto Univ.  20. ( 1980), 691--708.


\bibitem {I-SW}
 Ingebretson. D and  Sather-Wagstaff. S, Decompositions of monomial
ideals in real semigroup rings,  Comm. Alg. 41. (2013), 4363--4377.

\bibitem {Lyu2}
 Lyubeznik. G, Finiteness properties of local cohomology
modules (an application of D-modules to Commutative Algebra),
 Invent. Math. 113. (1993), 41--55.

\bibitem {N-R-E}
 Nadi. P,  Rahmati. F and  Eghbali. M, Lyubeznik tables of linked ideals, J. Algebra. Appl.
2019.

\bibitem {St-V}
 St{\"u}ckrad. J  and  Vogel. W, Buchsbaum rings and
applications,  1986.

\bibitem {V}
 Villarreal. R, Monomial Algebras, Marcel Dekker Inc, New
York, 2001.

\bibitem {Y}
 Yanagawa. K, Bass numbers of local cohomology modules with
supports in monomial ideals,  Math. Proc. Cambridge Philos. Soc. 131. (2001), 45--60.
\end{thebibliography}
\end{document}